\definecolor{linkB}{RGB}{4, 106, 143}
\newcommand{\sset}[1]{\left\{#1\right\}}
\declaretheorem[name=Lemma, numberwithin = section]{lemma}
\declaretheorem[name=Theorem,sibling = lemma]{theorem}
\crefname{theorem}{Theorem}{Theorems}
\crefname{proposition}{Proposition}{Propositions}
\crefname{lemma}{Lemma}{Lemmas}
\crefname{claim}{Claim}{Claims}
\crefname{subclaim}{Sub-Claim}{Sub-Claims}
\crefname{observation}{Observation}{Observations}
\crefname{remark}{Remark}{Remarks}
\crefname{corollary}{Corollary}{Corollaries}
\crefname{definition}{Definition}{Definitions}
\crefname{conjecture}{Conjecture}{Conjectures}
\crefname{question}{Question}{Questions}
\title{The sandwich problem for odd-hole-free and even-hole-free graphs}
\author[$^{\dagger}$]{Kathie Cameron\thanks{We acknowledge the support of the Natural Sciences and Engineering Research Council of Canada (NSERC), [funding reference number RGPIN-2016-06517]. Cette recherche a \'et\'e financ\'ee par le Conseil de recherches en sciences naturelles et en g\'enie du Canada (CRSNG), [num\'ero de r\'ef\'erence RGPIN-2016-06517]}}
\author[$^{\ddagger}$]{Aristotelis Chaniotis}
\author[$^{\mathsection}$]{Celina M. H. de Figueiredo\thanks{Supported by Conselho
Nacional de Desenvolvimento Cient\'ifico e Tecnol\'ogico of Brazil (CNPq) grant 305356/2021-6.}}
\author[$^{\ddagger}$]{Sophie Spirkl\thanks{We acknowledge the support of the Natural Sciences and Engineering Research Council of Canada (NSERC), [funding reference number RGPIN-2020-03912]. Cette recherche a \'et\'e financ\'ee par le Conseil de recherches en sciences naturelles et en g\'enie du Canada (CRSNG), [num\'ero de r\'ef\'erence RGPIN-2020-03912]. This project was funded in part by the Government of Ontario. This research was conducted while Spirkl was an Alfred P. Sloan Fellow.}}
\affil[$^{\dagger}$]{Department of Mathematics, Wilfrid Laurier University, \protect\\ Waterloo, Ontario, N2L 3C5, Canada \protect \vspace{0.25cm}}
\affil[$^{\ddagger}$]{Department of Combinatorics and Optimization, University of Waterloo, \protect\\ Waterloo, Ontario, N2L 3G1, Canada \protect \vspace{0.25cm}}
\affil[$^{\mathsection}$]{COPPE, Universidade Federal do Rio de Janeiro, \protect\\ Rio de Janeiro, 21941-918, Brazil}
\date{\today}
\begin{document}

\maketitle

\fontsize{12}{16}\selectfont

\begin{abstract}
For a property $\mathcal{P}$ of graphs, the $\mathcal{P}$-\textsc{Sandwich-Problem}, introduced by Golumbic and Shamir (1993), is the following: Given a pair of graphs $(G_1, G_2)$ on the same vertex set $V$, does there exist a graph $G$ such that $V(G)=V$, $E(G_{1})\subseteq E(G) \subseteq E(G_{2})$, and $G$ satisfies $\mathcal{P}$? A {\em hole} in a graph is an induced subgraph which is a cycle of length at least four. An odd (respectively even) hole is a hole of odd (respectively even) length. Given a class of graphs $\mathcal{C}$ and a graph $G$ we say that $G$ is {\em $\mathcal{C}$-free} if it contains no induced subgraph isomorphic to a member of $\mathcal{C}$. In this paper we prove that if $\mathcal{P}$ is the property of being odd-hole-free or the property of being even-hole-free, then the $\mathcal{P}$-\textsc{Sandwich-Problem} is $\NP$-hard.
\end{abstract}

\section{Introduction} \label{sec:int}
All graphs in this paper are finite and simple. Let $G$ be a graph. $G^c$ denotes the \emph{complement} of $G$, obtained from $G$ by replacing each edge with a non-edge and vice versa. 

Let $G_1 = (V_1, E_1)$ and $G_2 = (V_2, E_2)$. Then $G_2$ is a \emph{supergraph} of $G_1$ if $V_1 = V_2 = V$ and $E_1 \subseteq E_2$. A pair $(G_1, G_2)$ of graphs so that $G_2$ is a supergraph of $G_1$ is called a \emph{sandwich instance}. The edges in $E_1$ are called \emph{forced}, while the edges in $E_2 \setminus E_1$ are \emph{optional}. 
We let $E_3$ be the set of all edges in the complete graph with vertex set $V$ which are not in $E_2$, and call them \emph{forbidden} edges.
A graph $G$ is called a \emph{sandwich graph} for the sandwich instance $(G_1, G_2)$ if $G_2$ is a supergraph of $G$ and $G$ is a supergraph of $G_1$. 

For a property $\mathcal{P}$ of graphs, the $\mathcal{P}$-\textsc{Recognition-Problem} is the problem of deciding whether a given graph $G$ satisfies $\mathcal{P}$. The $\mathcal{P}$-\textsc{Sandwich-Problem} is the following: For a given sandwich instance $(G_1, G_2)$, does there exist a sandwich graph $G$ for $(G_1, G_2)$ so that $G$ satisfies $\mathcal{P}$?  This generalization of the recognition problem was introduced by Golumbic and Shamir \cite{golumbic}. The sandwich problem becomes the recognition problem when $G_1 = G_2$, and thus, if the $\mathcal{P}$-\textsc{Recognition-Problem} is $\NP$-hard, so is the $\mathcal{P}$-\textsc{Sandwich-Problem}. 

Let $\mathcal{P}$ be a graph property. We define the complementary property $\mathcal{P}^c$ by saying that $G$ satisfies $\mathcal{P}^c$ if and only if $G^c$ satisfies $\mathcal{P}$. 
Note that the $\mathcal{P}$-\textsc{Sandwich-Problem} is invariant under taking complements 
in the following sense. 

\begin{lemma}
\label{lem:complements}
The $\mathcal{P}^{c}$-\textsc{Sandwich-Problem} is $\NP$-hard if and only if the 
$\mathcal{P}$-\textsc{Sandwich-Problem} is. 
\end{lemma}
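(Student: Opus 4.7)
The plan is to give a pair of polynomial-time reductions between the two sandwich problems using complementation. The key observation is that taking complements reverses edge-set inclusion: if $V$ is the common vertex set and $K$ denotes the edge set of the complete graph on $V$, then $E(G_1) \subseteq E(G) \subseteq E(G_2)$ is equivalent to $K\setminus E(G_2) \subseteq K\setminus E(G) \subseteq K\setminus E(G_1)$, i.e.\ $E(G_2^c) \subseteq E(G^c) \subseteq E(G_1^c)$.

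First I would define the reduction. Given an instance $(G_1, G_2)$ of the $\mathcal{P}$-\textsc{Sandwich-Problem}, map it to $(G_2^c, G_1^c)$. Since $G_2$ is a supergraph of $G_1$, the graph $G_1^c$ is a supergraph of $G_2^c$, so $(G_2^c, G_1^c)$ is a valid sandwich instance, and the map is clearly computable in polynomial time.

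Next I would verify correctness. By the observation above, $G$ is a sandwich graph for $(G_1,G_2)$ if and only if $G^c$ is a sandwich graph for $(G_2^c, G_1^c)$. Moreover, by the definition of $\mathcal{P}^c$, the graph $G$ satisfies $\mathcal{P}$ if and only if $G^c$ satisfies $\mathcal{P}^c$. Hence $(G_1,G_2)$ admits a sandwich graph satisfying $\mathcal{P}$ if and only if $(G_2^c,G_1^c)$ admits a sandwich graph satisfying $\mathcal{P}^c$, giving a polynomial-time reduction from the $\mathcal{P}$-\textsc{Sandwich-Problem} to the $\mathcal{P}^c$-\textsc{Sandwich-Problem}. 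Applying the same argument with $\mathcal{P}$ and $\mathcal{P}^c$ swapped (and using $(\mathcal{P}^c)^c = \mathcal{P}$) gives the reverse reduction, and both directions of the claimed equivalence follow.

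There is no real obstacle here; the statement is essentially a bookkeeping exercise in how complementation interacts with subgraph inclusion, and the entire proof amounts to checking that the map $(G_1,G_2)\mapsto (G_2^c, G_1^c)$ is a valid parsimonious reduction that commutes with taking complements on sandwich graphs.
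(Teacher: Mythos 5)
Your proposal is correct and follows exactly the paper's argument: the paper's proof is the one-line observation that $(G_1,G_2)$ is a \textsc{Yes} instance for the $\mathcal{P}^c$-\textsc{Sandwich-Problem} if and only if $(G_2^c,G_1^c)$ is a \textsc{Yes} instance for the $\mathcal{P}$-\textsc{Sandwich-Problem}, which is precisely the reduction you describe. Your version just spells out the inclusion-reversal and polynomial-time details that the paper leaves implicit.
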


\begin{proof}[Proof of \autoref{lem:complements}]
An instance $(G_1, G_2)$ is a \textsc{Yes} instance for $\mathcal{P}^{c}$-\textsc{Sandwich-Problem} if and only if $(G_2^c, G_1^c)$ is a \textsc{Yes} instance for the $\mathcal{P}$-\textsc{Sandwich-Problem}.  
\end{proof}

We let $P_k$ denote an induced path on $k$ vertices, and $C_k$ denote an induced cycle on $k$ vertices. 
An induced cycle $C_k$ with $k \geq 4$ vertices is called a \emph{hole}; it is called an \emph{odd hole} if $k$ is odd, and an \emph{even hole} if $k$ is even. An \emph{antihole} is the complement of a hole. It is an \emph{odd antihole} if its complement is an odd hole, and an \emph{even antihole} otherwise. Let $\mathcal{C}$ be a set of graphs. We say that $G$ is \emph{$\mathcal{C}$-free} if no induced subgraph of $G$ is isomorphic to a graph in $\mathcal{C}$. 

A graph is \emph{Berge} if it contains no odd hole and no odd antihole as an induced subgraph. A graph $G$ is \emph{perfect} if for each induced subgraph $H$ of $G$, the clique number of $H$ equals the chromatic number of $H$. The Strong Perfect Graph Theorem \cite{chudnovsky4}, first conjectured in \cite{berge}, states that a graph is perfect if and only if it is Berge. 
It is known that Berge graphs can be recognized in polynomial time \cite{chudnovsky3}.
Research has focused on the $\mathcal{P}$-\textsc{Sandwich-Problem} for subclasses $\mathcal{P}$ of perfect graphs, and for decompositions related to perfect graphs. 
The complexity of the \textsc{Perfect-Graph}-\textsc{Sandwich-Problem} remains one of the most prominent open questions in this area~\cite{golumbic2}.

This paper is organized as follows. In \autoref{sec:odd}, we show that the hardness reduction that proves that the $C_5$-\textsc{Free}-\textsc{Sandwich-Problem} is $\NP$-hard~\cite{dantas} actually proves that the 
\textsc{Odd-Hole-Free}-\textsc{Sandwich-Problem} is $\NP$-hard. In \autoref{sec:even}, we modify the hardness reduction that proves that the \textsc{Chordal-Sandwich-Problem} is $\NP$-hard~\cite{bodlaender} to prove that 
the \textsc{Even-Hole-Free}-\textsc{Sandwich-Problem} is $\NP$-hard. 
In \autoref{sec:conc}, we give our concluding remarks. 

\section{The Odd-Hole-Free-Sandwich-Problem} 
\label{sec:odd}
We show that the hardness reduction given in~\cite{dantas} that proves that the $C_5$-\textsc{free}-\textsc{Sandwich-Problem} is $\NP$-hard actually 
proves that the \textsc{Odd-Hole-Free}-\textsc{Sandwich-Problem} is $\NP$-hard.
For the convenience of the reader, in the present section, we use the same notation of~\cite{dantas}. 

\begin{figure}[ht]
    \centering
    \includegraphics[scale=1.5, page=4]{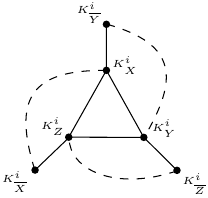}
    \caption{This figure illustrates the construction that we discuss in the proof of \autoref{thm:odd}. In the case which is illustrated the clause contains the literal $\overline{x}_{1}$. Edges represented with dashed line segments are optional edges, that is,
    edges of $E_{2} \setminus E_1$;
    the solid edges are forced, that is, edges of $E_{1}$. The omitted edges are forbidden, that is, edges of $E_{3}$.}
    \label{fig:c5-free}
\end{figure}

\begin{theorem}
\label{thm:odd} 
The \textsc{Odd-Hole-Free}-\textsc{Sandwich-Problem} is $\NP$-complete.
\end{theorem}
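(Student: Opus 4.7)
The plan is as follows. For membership in $\NP$, given a candidate sandwich graph $G$ one can check in polynomial time that $E(G_1)\subseteq E(G)\subseteq E(G_2)$ and that $G$ is odd-hole-free; the latter is possible by the polynomial-time recognition algorithm for odd-hole-free graphs of Chudnovsky, Scott, Seymour and Spirkl. For $\NP$-hardness, I would take the reduction from \textsc{3-SAT} to the $C_5$-\textsc{Free}-\textsc{Sandwich-Problem} given in \cite{dantas} and re-interpret its output $(G_1,G_2)$ as an instance of the \textsc{Odd-Hole-Free}-\textsc{Sandwich-Problem}.

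One direction of correctness is immediate: if $(G_1,G_2)$ admits an odd-hole-free sandwich graph $G$, then $G$ is in particular $C_5$-free, so by the analysis in \cite{dantas} the underlying \textsc{3-SAT} instance is satisfiable. The other direction requires more work. In \cite{dantas} it is shown that when the \textsc{3-SAT} instance is satisfiable, a specific sandwich graph $G$, obtained by choosing for each variable whether to include certain optional edges according to the truth value of that variable (as illustrated in \autoref{fig:c5-free}), has no induced $C_5$. What I need to strengthen this to is that the very same $G$ contains no induced odd cycle of any length at least five.

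The main obstacle is therefore to rule out induced odd holes of length at least seven in the constructed sandwich graph $G$. My plan is to analyse how a putative induced odd hole $C$ of $G$ interacts with the variable and clause gadgets of \cite{dantas}: since each gadget has constant size and the combination of forced and forbidden edges tightly restricts the way any induced path can enter, traverse, and leave a gadget, a careful case analysis on the intersection of $C$ with each gadget should show that any such $C$ either already contains a shorter induced odd cycle (reducing to the $C_5$-free argument of \cite{dantas}) or must admit a chord, contradicting the fact that $C$ is induced. Combining the two directions then establishes $\NP$-hardness, which together with membership in $\NP$ gives $\NP$-completeness.
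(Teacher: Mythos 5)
There is a genuine gap in the forward direction, and it is not just a matter of unfinished case analysis: the claim you would need — that the sandwich graph $G$ constructed in \cite{dantas} from a satisfying assignment has no induced odd cycle of length at least seven — appears to be false. In the clause gadget, the five-cycle $p_1^j,\dots,p_5^j$ carries three optional edges $p_q^jp_{q+1}^j$ and two forced edges, and each optional edge is shadowed by the forced path $p_q^j\hbox{-}z_q^j\hbox{-}t_q^j\hbox{-}p_{q+1}^j$. A satisfying assignment forces at least one of the three optional edges, say $p_3^jp_4^j$, to be \emph{absent} (otherwise $\{p_1^j,\dots,p_5^j\}$ induces a $C_5$); but then $p_1^j,p_2^j,p_3^j,z_3^j,t_3^j,p_4^j,p_5^j$ is an induced seven-cycle, since all the omitted pairs are forbidden edges. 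So the very sandwich graph whose $C_5$-freeness \cite{dantas} establishes contains odd holes of length seven, and no amount of case analysis will rescue the direct re-interpretation of that instance as an \textsc{Odd-Hole-Free} instance. (Your easy direction and your $\NP$-membership argument are fine.)

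The paper sidesteps this by working in the complement. It observes that in the constructed $(G_1,G_2)$ every triangle of $G_2$ shares exactly one (forced) edge with exactly one other triangle, so $G_2$ — and hence every sandwich graph — contains no $\{P_4+P_1\}^c$ and therefore no $P_6^c$, hence no antihole $C_k^c$ with $k\ge 7$. Consequently, on these instances being odd-\emph{antihole}-free is equivalent to being $C_5$-free, so the \textsc{Odd-Antihole-Free}-\textsc{Sandwich-Problem} is $\NP$-hard, and \autoref{lem:complements} (replacing $(G_1,G_2)$ by $(G_2^c,G_1^c)$) transfers this to the \textsc{Odd-Hole-Free}-\textsc{Sandwich-Problem}. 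If you want to salvage your outline, you should adopt this complementation step rather than trying to prove the constructed sandwich graph itself is odd-hole-free.
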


\begin{proof}[Proof of \autoref{thm:odd}]
First we briefly describe a construction given in~\cite[see Theorem 2 and Figure 5]{dantas} that establishes that the $C_5$-\textsc{free}-\textsc{Sandwich-Problem} is $\NP$-hard.
Let $(U,C)$ be a \textsc{3-sat} instance with a set $U = \sset{x_1, \dots, x_n}$ of variables and a set $C = \sset{c_1, \dots, c_m}$ of clauses such that each clause contains exactly three variables.
For every variable $x_i$ of $(U, C)$, we will define a set $X_i$, which consists of a five-cycle of forced edges of $E_1$, along with two optional edges of $E_2$, which are chords of this five-cycle and form a matching. At least one of these optional edges is present in every $C_5$-free sandwich graph for our instance, and which of the chords is present will correspond to whether $x_i$ is true or false. For every clause $c_j$ of $(U, C)$, and every literal $\ell_q^j \in \sset{x_i, \overline{x_i}}$ in $c_j$, we add two gadgets $\sset{r^j_{q1}, \dots, r^j_{q5}}$ and $\sset{s^j_{q1}, \dots, s^j_{q5}}$, 
each a five-cycle of forced edges along with two optional edges, which are designed to provide a copy of $x_i$ if $x_i$ is true and if $x_i$ is false, respectively\footnote{We remark that the auxiliary vertices with no label are in a two-edge path needed to connect the gadgets.}.
For $c_j$, we add a five-cycle with vertex set 
$\sset{p^j_1, \dots, p^j_5}$, which has one optional edge of $E_2$ for each literal, and two forced edges of $E_1$, we also add the vertices
$\cup_{q\in \{1,2,3\}}\{l_{q}^{j},t_{q}^{j},z_{q}^{j}\}$, and finally for each $q\in\{1,2,3\}$, we add forced edges so that $p_{q}^{j}, z_{q}^{j}, t_{q}^{j}, p_{q+1}^{j}, l_{q}^{j}$ is a five-cycle of forced edges, and we add the optional edge $t_{q}^{j}l_{q}^{j}$ in $E_{2}$. It follows that if there is a set of optional edges that we can add to create a $C_5$-free sandwich graph $G$, then there is a truth assignment in which for every clause $c_j$, not all three optional edges among  $\sset{p^j_1, \dots, p^j_5}$ are present in $G$, and hence $c_j$ contains a true literal. 

It is clear from the construction that for every \textsc{3-SAT} instance in which each clause contains exactly three variables, the graph $G_1$ of forced edges is triangle-free. The graph of optional edges is a forest, and each connected component consists either of an edge, a single vertex, or a three-edge path. So every triangle in $G_2$ contains exactly one optional edge and two forced edges.

\begin{figure}[ht]
    \centering
    \includegraphics[page=5, scale=1]{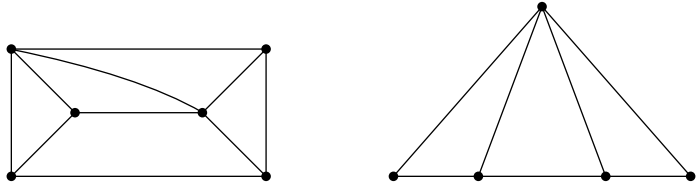}
    \caption{On the left-hand side the graph $P_{6}^{c}$ and on the right-hand side the graph $\{P_{4}+P_{1}\}^{c}$.}
    \label{fig:p6.compl}
\end{figure}

Every triangle in $G_2$ shares exactly one edge with another triangle in $G_2$, and this edge is a forced edge shared by exactly two triangles, and hence the graph $G_{2}$ does not contain as a (non-induced) subgraph the graph $\{P_{4}+P_{1}\}^{c}$, because the latter contains a triangle sharing two of its edges with other triangles (see \autoref{fig:p6.compl}).
Since $P^{c}_{6}$ contains $\{P_{4}+P_{1}\}^{c}$ as an induced subgraph, by the above observation, it follows that every sandwich graph $G$ for the constructed instance $(G_1, G_2)$ is $P^c_6$-free.
Hence, every sandwich graph $G$ for the constructed instance $(G_1, G_2)$  is $C^c_k$-free, for $k \geq 7$.
Therefore, the \textsc{Odd-Antihole-Free}-\textsc{Sandwich-Problem} is $\NP$-hard.
By \autoref{lem:complements}, we obtain that the \textsc{Odd-Hole-Free}-\textsc{Sandwich-Problem} 
is $\NP$-hard.
\end{proof}

\newcommand{\bsl}{\backslash}
\newcommand{\gp}{G^\prime}
\newcommand{\ep}{E^\prime}
\newcommand{\xbar}{{\overline{X}}}
\newcommand{\ybar}{{\overline{Y}}}
\newcommand{\zbar}{{\overline{Z}}}
\newcommand{\abar}{{\overline{a}}}
\newcommand{\Abar}{{\overline{A}}}
\newcommand{\Bbar}{{\overline{B}}}
\newcommand{\Cbar}{{\overline{C}}}
\newcommand{\ione}{${\cal I}_1$\ }
\newcommand{\itwo}{${\cal I}_2$\ }
\newcommand{\al}{($\alpha$)}
\newcommand{\be}{($\beta$)}
\newcommand{\ga}{($\gamma$)}
\newcommand{\de}{($\delta$)}
\newcommand{\uset}{{\cal U}}
\newcommand{\aset}{{\cal A}}
\newcommand{\bset}{{\cal B}}

\section{The Even-Hole-Free-Sandwich-Problem} 
\label{sec:even}

For the convenience of the reader, in the present section, we use the same notation of~\cite{bodlaender}.
We shall modify a construction given in~\cite{bodlaender} that establishes that the \textsc{Chordal-Sandwich -Problem} is $\NP$-hard in order to prove the following:

\begin{theorem}
\label{thm:even} 
The \textsc{Even-Hole-Free-Sandwich-Problem} is $\NP$-hard.
\end{theorem}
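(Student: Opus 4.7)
The plan is to adapt the reduction of Bodlaender et al.\ that proves the \textsc{Chordal-Sandwich-Problem} is $\NP$-hard. Their reduction (from a variant of \textsc{SAT}) builds a sandwich instance $(G_1,G_2)$ in which the existence of a chordal sandwich graph exactly captures satisfiability: the forbidden edges of $E_3$ generate a family of potential chordless cycles, and optional edges of $E_2 \setminus E_1$ correspond to truth assignments whose consistent choices chord all such cycles away. Since every chordal graph is trivially even-hole-free, on \textsc{Yes} instances their construction already produces an even-hole-free sandwich graph for free. What needs attention is the \textsc{No} direction: when no chordal sandwich graph exists, we need to ensure that every sandwich graph contains not merely \emph{some} hole but an \emph{even} hole; for example, an unsatisfiable input that forces a $C_5$ in every sandwich graph would not suffice.

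First, I would inspect the gadgets of the Bodlaender construction and enumerate, for each forbidden pair $\{u,v\} \in E_3$, the potential induced cycles through $uv$ that can appear in some sandwich graph. These arise from short concatenations of forced paths and optional edges with the forbidden edges filling the role of missing chords. If every such potential chordless cycle already has even length, the same reduction would immediately give $\NP$-hardness for \textsc{Even-Hole-Free-Sandwich}. If some critical cycle has odd length, I would modify it by subdividing one of its forced edges: inserting a new vertex joined to the two endpoints by forced edges and to all other vertices by forbidden edges. Each such subdivision toggles the parity of every cycle passing through the subdivided edge, so a careful choice of subdivisions makes every potential chordless witness cycle of even length, without allowing the new vertices to participate in any other induced cycle of a sandwich graph.

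Next, I would verify the two directions of the resulting reduction. For the \textsc{Yes} direction, the sandwich graph produced by Bodlaender's construction from a satisfying assignment is chordal, and I would check that subdivision preserves this: the subdivided graph is obtained from a chordal graph by subdividing edges whose endpoints sit in the same clique structure, so no new hole is created, in particular no even hole. For the \textsc{No} direction, unsatisfiability implies that every sandwich graph contains at least one of the enumerated chordless witness cycles; by the parity arrangement above, every such witness cycle has even length, so every sandwich graph contains an even hole. Combined with trivial membership in $\NP$, this establishes $\NP$-completeness.

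The main obstacle will be the parity bookkeeping in the second step. New vertices introduced by subdivision can interact with optional edges elsewhere in the construction to produce unintended induced cycles, some of which might be even holes appearing already in satisfying branches and breaking the \textsc{Yes} direction. Preventing this will likely require placing forbidden edges between each new subdivision vertex and everything outside the single subdivided edge, and checking the local structure of each gadget case by case. Since the Bodlaender gadgets are small and repeated in a controlled way, I expect this analysis to be finite and routine, but it is the step that demands the most care.
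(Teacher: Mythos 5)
Your starting point matches the paper's: both begin from the Bodlaender et al.\ chordal-sandwich reduction and both exploit the fact that a chordal completion is automatically even-hole-free, so a satisfying assignment yields an even-hole-free sandwich graph essentially for free. The paper's actual modification is also in the spirit of your parity idea, but it is not a subdivision of forced edges: it adds a single forced path $H\hbox{--}W_1\hbox{--}W_2\hbox{--}F$ across the \emph{forbidden} pair $HF$, with all other edges at $W_1,W_2$ forbidden. The point of this path is not merely to fix parities but to manufacture six-holes $H,W_1,W_2,F,K_X^i,S_{\overline{X}}$ whose only admissible chords are exactly the two ``orientation'' edges $HK_X^i$ and $FS_{\overline{X}}$; even-hole-freeness then \emph{forces} an orientation of every variable gadget, and four-hole arguments force consistency and a satisfied literal in each clause.

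The genuine gap in your plan is the \textsc{No} direction. You propose to enumerate, for each forbidden pair, the potential chordless cycles through it, and to arrange by subdivision that all of them are even, concluding that every sandwich graph of an unsatisfiable instance contains an even hole. This does not work as stated, for two reasons. First, in the Bodlaender instance $E_3$ is only a matching and everything outside $E_1\cup E_3$ is optional, so $G_2$ is dense; the holes of a sandwich graph depend on which optional edges are \emph{omitted}, and there is no finite canonical list of witness cycles that every non-chordal sandwich graph must contain. Bodlaender's unsatisfiability argument is structural (chordality forces orientation choices which encode an assignment), not an enumeration of witnesses, and you would have to redo that structural argument for even-hole-freeness --- which is precisely the content of the paper's proof. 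Second, the forced six-cycles of the gadgets are already even, so parity is not the obstruction at all: the real difficulty is to ensure that the \emph{only} ways to chord away all even holes correspond to satisfying assignments, rather than to some stray choice of optional edges that leaves only odd holes (or no holes of the relevant kind). Your subdivision device also risks breaking the \textsc{Yes} direction, as you note, since subdividing a forced edge that lies in triangles of the chordal completion creates four-holes; the paper avoids this entirely by attaching the new path across a forbidden pair and then checking directly that any even hole through $W_1,W_2$ in the completed graph would have to have length five or use a nonexistent $H$--$F$ path through non-true shoulders and knees.
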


\begin{proof}[Proof of \autoref{thm:even}]
First we briefly describe a construction given in~\cite{bodlaender} that establishes that the chordal graph sandwich problem is $\NP$-hard.
Let $(U,C)$ be a \textsc{3-SAT} instance with a set $U$ of variables and a set $C$ of clauses such that each clause contains exactly three variables. 
We assume that no clause contains both a variable and its complement. 
We have a vertex $H$ called the head and a vertex $F$  called the foot.
For each variable $X$, we have two vertices $S_X$ and $S_{\overline{X}}$, called {\em shoulders}.
For each variable $X$ and each clause $i$ containing either $X$ or ${\overline{X}}$, we have two vertices $K_X^i$ and $K_{\overline{X}}^i$, called {\em knees}.
These are the vertices in the vertex set $V$ of the constructed instance.

\begin{figure}[h]
    \centering
    \includegraphics[page=1, scale=1.25]{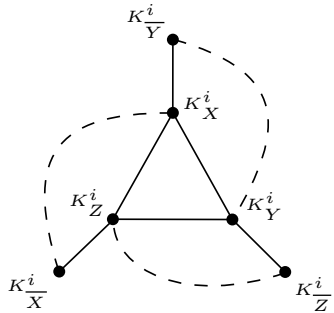}
    \caption{A gadget which corresponds to the clause $(X+Y+Z)$ in the construction 
    from~\cite{bodlaender}. Solid edges are forced, dashed edges are forbidden, and omitted edges are optional.}
    \label{fig:even_holes.clause}
\end{figure}

The forbidden edges in $E_3$ are $HF$, $S_XS_{\overline{X}}$, $K_X^iK_{\overline{X}}^i$, for $X\in U$ and clauses $i$ containing $X$ or $\overline{X}$, so $E_3$ is a matching.
For each variable $X$ and each clause $i$ containing either $X$ or ${\overline{X}}$, 
we will have a six-cycle $H, S_X, K_{\overline{X}}^i, F, K_X^i, S_{\overline{X}}$
of forced edges in $E_1$. For each clause $i = (X + Y + Z)$, we will have a forced 3-sun with triangle $\sset{K_X^i, K_Y^i, K_Z^i}$, and forced pendant edges $K_{\overline{Y}}^iK_X^i, K_{\overline{Z}}^iK_Y^i, K_{\overline{X}}^iK_Z^i$ (see \autoref{fig:even_holes.clause}). The vertices $K_X^i, K_Y^i, K_Z^i$ are called {\em active knees}, and the vertices $K_{\overline{X}}, K_{\overline{Y}}^i, K_{\overline{Z}}^i$ are called {\em inactive knees}. All remaining edges  in the complete graph with vertex set $V$ which are not in $E_1 \cup E_3$ are optional edges. In what follows we refer to the graphs $(V,E_{1})$ and $(V,E_{2})$ as $G_{1}$ and $G_{2}$ respectively.

As shown in \cite{bodlaender}, there is a satisfying truth assignment for $(U,C)$ if and only if there is a sandwich graph $G$ for the sandwich instance $(G_{1}, G_{2})$, such that $G$ is chordal. As noted in \cite{bodlaender}, there are exactly two ways to add optional edges to the graph $G_{1}$ in order to triangulate each forced six-cycle corresponding to a variable: the {\em positive} orientation adds $HK_X^i, K_X^iS_X, S_XF$ or the {\em negative} orientation adds $HK_{\overline{X}}^i, K_{\overline{X}}^iS_{\overline{X}}, S_{\overline{X}}F$. Further, all six-cycles corresponding to variable $X$ must be oriented in the same way, since otherwise the set $\{H,S_{X}, S_{\overline{X}}, F\}$ induces a four-cycle. Therefore, given a chordal completion $G$ we may obtain a truth assignment for $(U,C)$, and a truth assignment for $(U,C)$ {\em indicates a set of optional edges} that can be added to the sandwich instance in order to triangulate each forced six-cycle corresponding to a variable.

Suppose that the sandwich instance described above has a chordal completion $G$. Then the truth assignment for $(U,C)$ which we obtain by $G$, as described above, is satisfying ~\cite[see Lemma 2]{bodlaender}.

Conversely, suppose that we have a satisfying truth assignment for $(U,C)$. Then we add to the corresponding sandwich instance the set of optional edges which is indicated by this truth assignment in order to triangulate each forced six-cycle. To finish the chordal completion ~\cite[see Lemma 2]{bodlaender}, the following sets of optional edges are added: 
\begin{itemize}
    \item true shoulders that is, shoulders $S_{X}$ where $X$ is true and shoulders $S_{\overline{X}}$ where $\overline{X}$ is false, form a clique;
    \item true knees that is, knees $K_{X}^{i}$ where $X$ is true and knees $K^i_{\overline{X}}$ where $X$ is false, form a clique; and
    \item each true shoulder is adjacent to every knee.
\end{itemize}

Next we show how to modify the construction given in~\cite{bodlaender} which we described above in order to establish that the \textsc{Even-Hole-Free-Sandwich-Problem} is $\NP$-hard.
We add two auxiliary vertices $W_1$ and $W_2$ to the vertex set $V$, obtaining $V'= V \cup \{W_1, W_2\}$.
We add three forced edges to the forced edge set $E_1$, such that we have a four-vertex path induced by $\sset{H, W_1, W_2, F}$,
obtaining $E'_1 = E_1 \cup \{HW_1, W_1W_2, W_2F\}$. All other edges incident to vertices $W_{1}, W_{2}$ are forbidden edges in $E'_3$. In what follows  we refer to the graphs $(V',E'_{1})$ and $(V',E'_{2})$ as $G_{1}'$ and $G_{2}'$, respectively.

\begin{figure}
    \centering
    \includegraphics[page=3, scale=1.5]{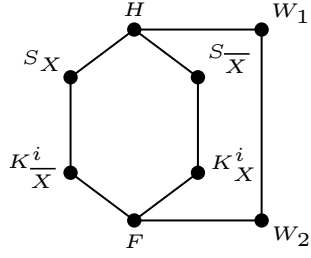}
    \caption{Adding a four-vertex path in the construction from ~\cite{bodlaender}. Both the vertices $W_{1}$ and $W_{2}$ have degree two in $G_{2}'$.}
    \label{fig:even_holes.adding.P_4}
\end{figure}

We show that there is a satisfying truth assignment for $(U,C)$ if and only if there is a sandwich graph $G$ for $(G_{1}',G_{2}')$, such that $G$ is even-hole-free. 

Suppose there is a satisfying truth assignment for $(U,C)$. Consider the sandwich graph $G$ obtained by adding the optional edges added in~\cite{bodlaender} as described above. Recall that the subgraph of $G$ induced by $V= V' \setminus \{W_1, W_2\}$ is a chordal graph.
Suppose $G$ contains an even hole $B$. Then $B$ contains both auxiliary vertices $W_1$ and $W_2$, and also vertices $H$ and $F$.
If the hole $B$ contains a true shoulder $s$, then since $s$ is adjacent in $G$ to $H$ and $F$, we have that $|B| = 5$.
Otherwise, if the hole $B$ contains a true knee $k$, then since $k$ is adjacent to $H$ and $F$, again we have that $|B| = 5$. Therefore, $B$ contains a path from $H$ to $F$ using only the non-true shoulders and knees. However, the set of non-true shoulders and non-true knees is an independent set, with shoulders only adjacent to $H$, and knees only adjacent to $F$, so no such path exists.
We conclude that $G$ is even-hole-free.

Suppose there is a sandwich graph $G$ that is even-hole-free.
The six-hole induced by $\{H, W_1, W_2, F, K_X^i, S_{\overline{X}}\}$ in $G_{1}'$ implies that the sandwich graph $G$ contains at least one of the optional edges $HK_X^i$ or $FS_{\overline{X}}$. If $HK_X^i \in E(G)$, then $HK_{\overline{X}}^i \not \in E(G)$, as otherwise we have a $C_4$ induced by $\{H, K_X^i, F, K_{\overline{X}}^i\}$ in $G$. We claim that $FS_X \in E(G)$. Suppose not, then the set $\{H, W_1, W_2, F, K_{\overline{X}}^i, S_X\}$ induces a six-hole in $G$, which is a contradiction. Hence, $FS_X \in E(G)$.
We conclude that if $HK_X^i \in E(G)$, then $FS_X \in E(G)$, which in turn implies $S_XK_X^i  \in E(G)$, as otherwise we have a $C_4$ induced by $\{H, K_X^i, F, S_X\}$. We call the three added optional edges $HK_X^i, FS_X, S_XK_X^i$ the {\em positive} orientation. Otherwise $FS_{\overline{X}} \in E(G)$, and then $FS_X \not \in E(G)$, as otherwise we have a $C_4$ induced by $\{H, F, S_X, S_{\overline{X}}\}$. Now the six-hole induced by $\{H, W_1, W_2, F, K_{\overline{X}}^i, S_X\}$ implies that $HK_{\overline{X}}^i \in E(G)$.
We conclude that if $FS_{\overline{X}} \in E(G)$, then $HK_{\overline{X}}^i \in E(G)$, which in turn implies $S_{\overline{X}}K_{\overline{X}}^i \in E(G)$, as otherwise we have a $C_4$ induced by $\{H, F, S_{\overline{X}}, K_{\overline{X}}^i\}$. We call the three added optional edges $FS_{\overline{X}}, HK_{\overline{X}}^i, S_{\overline{X}}K_{\overline{X}}^i$ the {\em negative} orientation.
All six-cycles corresponding to variable $X$ must be oriented in the same way, which defines the truth assignment of $X$ to be true or false.

\begin{figure}[ht]
    \centering
    \includegraphics[page=2, scale=1.25]{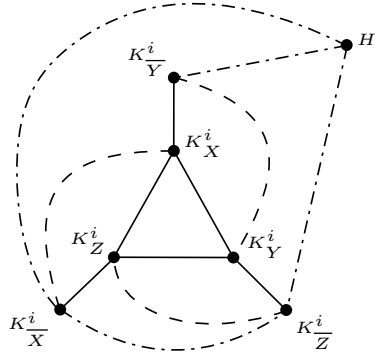}
    \caption{If $i = (X + Y + Z)$ is a clause with three negative literals and $G$ the corresponding sandwich graph, then the four-cycle $\{H, K_{\overline{Z}}^i,F,K_{\overline{X}}^i\}$ which is formed by the optional edges which 
   are assumed to be present in $G$
implies that the optional edge $K_{\overline{Z}}^iK_{\overline{X}}^i$ 
    must be present in $G$ as well. Solid edges are forced, dashed edges are forbidden, and dashed-dotted edges are the optional edges 
    known so far to be present in $G$.}
    
     \label{fig:even_holes_finding.a.C4}
\end{figure}

To prove that we have defined a satisfying truth assignment, assume that there is a clause $i = (X + Y + Z)$ with three negative literals.
The four-cycle $\{H, K_{\overline{Z}}^i,F,K_{\overline{X}}^i\}$ which is formed by the optional edges which 
are assumed to be present in $G$
implies that the optional edge $K_{\overline{Z}}^iK_{\overline{X}}^i$ 
must be present in $G$ as well 
(see \autoref{fig:even_holes_finding.a.C4}). By symmetry, $K_{\overline{X}}^i K_{\overline{Y}}^i \in E(G)$.
The four-cycle induced by $\{K_{\overline{Z}}^i, K_Y^i, K_Z^i, K_{\overline{X}}^i\}$ implies the optional edge $K_Y^i K_{\overline{X}}^i \in E(G)$.
We conclude that $G$ contains a four-hole induced by $\{K_X^i, K_Y^i, K_{\overline{X}}^i, K_{\overline{Y}}^i\}$, which is a contradiction.
\end{proof}

\section{Concluding remarks}  \label{sec:conc}
Almost monotone properties were introduced in~\cite{chudnovsky}, where the sandwich problem was proved to be in $\P$ in case the recognition problem was known to be in $\P$. The properties of containing an odd hole and of containing an even hole were proved to be almost monotone.
At that time, even-hole-free graphs and Berge graphs were known to be recognized in polynomial time~\cite{conforti, chudnovsky3}, but the  polynomial-time recognition of odd-hole-free graphs was established later~\cite{chudnovsky2}. 

In the \textsc{Not}-$\mathcal{C}$-\textsc{Free}-\textsc{Sandwich-Problem}, we are asking if there exists a sandwich graph in which there exists an induced subgraph isomorphic to a graph in $\mathcal{C}$, whereas in the $\mathcal{C}$-\textsc{Free}-\textsc{Sandwich-Problem}, we are testing if there exists a sandwich graph $G$ such that for every induced subgraph $H$ of $G$, $H$ is not in $\mathcal{C}$. The latter problem has an additional alternation, which is an indication that the \textsc{Not}-$\mathcal{C}$-\textsc{Free}-\textsc{Sandwich-Problem} might always be easier than the $\mathcal{C}$-\textsc{Free}-\textsc{Sandwich-Problem}. Clearly, if the $\mathcal{C}$-\textsc{Free}-\textsc{Recognition-Problem} is $\NP$-hard (e.\ g.\ if $\mathcal{C}$ is the set of prisms), then the \textsc{Not}-$\mathcal{C}$-\textsc{Free}-\textsc{Sandwich-Problem} is $\NP$-hard. This leads to two open questions: 
\begin{itemize}
\item Is there a class of graphs $\mathcal{C}$ such that the $\mathcal{C}$-\textsc{Free}-\textsc{Recognition-Problem} is in $\P$, but the \textsc{Not}-$\mathcal{C}$-\textsc{Free}-\textsc{Sandwich-Problem} is $\NP$-hard?
\item Is there a set $\mathcal{C}$ such that the $\mathcal{C}$-\textsc{Free}-\textsc{Sandwich-Problem} is in $\P$, but the \textsc{Not}-$\mathcal{C}$-\textsc{Free}-\textsc{Sandwich-Problem} is $\NP$-hard?
\end{itemize}
 
Truemper configurations \cite{truemper}, such as prisms, thetas, and pyramids, were considered for the \textsc{Not}-$\mathcal{C}$-\textsc{Free}-\textsc{Sandwich-Problem}. Both the \textsc{Prism}-\textsc{Free}-\textsc{Sandwich-Problem} and \text{Not}-\textsc{Prism}-\textsc{Free}-\textsc{Sandwich-Problem} are $\NP$-hard, because the \textsc{Prism}-\textsc{Free}-\textsc{Recogni-tion-Problem} is $\NP$-hard~\cite{maffray}. However, the \textsc{Theta}-\textsc{Free}-\textsc{Sandwich-Problem} is $\NP$-hard \cite{dantas}, but the \textsc{Not-Theta-Free-Sandwich-Problem} is in $\P$~\cite{chudnovsky}. The \textsc{Not-Pyramid-Free-Sandwich-Problem} is in $\P$~\cite{chudnovsky}, but the complexity of the \textsc{Pyramid-Free-Sand-wich-Problem} remains open. The complexity of the \textsc{Perfect-Graph-Sandwich}-\textsc{Prob-lem} 
remains one of the most prominent open questions in this area~\cite{golumbic2}.

\printbibliography

\end{document}